\newtheorem{theorem}{Theorem}
\newtheorem*{theorem*}{Theorem}
\newtheorem*{conjecture*}{Conjecture}
\newtheorem{corollary}[theorem]{Corollary}
\newtheorem*{claim*}{Claim}
\theoremstyle{definition}
\newtheorem*{goal*}{Goal}
\newcommand{\linearmapj}{L_j}
\newcommand{\linearfamily}{\mathbf{L}}
\theoremstyle{remark}
\def\M{\mathfrak{M}}
\title[Tomographic Fourier Extension Identities]{Tomographic Fourier Extension Identities for Submanifolds of $\mathbb{R}^n$}
\author[Bennett]{Jonathan Bennett}
\address[Jonathan Bennett]{School of Mathematics, The Watson Building, University of Birmingham, Edgbaston, Birmingham, B15 2TT, England.}
\email{J.Bennett@bham.ac.uk}
\author[Nakamura]{Shohei Nakamura}
\address[Shohei Nakamura]{Department of Mathematics, Graduate School of Science, Osaka University, Toyonaka, Osaka 560-0043, Japan.}
\email{srmkn@math.sci.osaka-u.ac.jp}
\author[Shiraki]{Shobu Shiraki}
\address[Shobu Shiraki]{Department of Mathematics, Graduate School of Science and Engineering, Saitama University, Saitama 338-8570, Japan.}
\email{sshiraki@mail.saitama-u.ac.jp}
\begin{document}

\date{\today}
\keywords{Fourier extension operator, $k$-plane transform}

\subjclass[2020]{42B10, 44A12}

\maketitle
\begin{abstract}
We establish identities for the composition $T_{k,n}(|\widehat{gd\sigma}|^2)$, where $g\mapsto \widehat{gd\sigma}$ is the Fourier extension operator associated with a general smooth $k$-dimensional submanifold of $\mathbb{R}^n$, and $T_{k,n}$ is the $k$-plane transform. Several connections to problems in Fourier restriction theory are presented.
\end{abstract}
\section{Introduction}
The purpose of this article is to establish identities involving expressions of the form 
\begin{equation}\label{composition}
T_{k,n}(|\widehat{gd\sigma}|^2),
\end{equation}
 where $T_{k,n}$ denotes the \textit{$k$-plane transform} on $\mathbb{R}^n$ and $$\widehat{gd\sigma}(x)=\int_Sg(\xi)e^{-2\pi ix\cdot\xi}d\sigma(\xi)$$ is the \textit{Fourier extension operator} associated with a general smooth $k$-dimensional submanifold $S$ of $\mathbb{R}^n$; here $d\sigma$ denotes surface measure on $S$. Many problems in harmonic analysis and its applications call for an understanding of Fourier extension operators, and we refer to \cite{Stovall} for further context. The particular interest in compositions of the form \eqref{composition} stems from two very simple observations. The first is that they involve $L^2$ norms on affine subspaces, allowing for the application of $L^2$ methods (such as Plancherel's theorem). The second is the well-known fact that the $k$-plane transform is invertible on $L^2$, raising the prospect that a variety of expressions involving $|\widehat{gd\sigma}|^2$ may be understood via the composition \eqref{composition}. This idea may be traced back at least as far as Planchon and Vega \cite{PV} (see also \cite{VegaEsc}), and has motivated several works recently -- see for example \cite{BBFGI}, \cite{BI}, \cite{BV}, \cite{BN}. 
 
In this work we place particular emphasis on the \textit{generality of the submanifold} $S$, allowing us to bring the underlying geometric features of the tomographic data $T_{k,n}(|\widehat{gd\sigma}|^2)$  to the fore. Our results build on the work of the first and second authors in \cite{BN} in the particular case where $S$ is the unit sphere. In addition to clarifying the underlying geometry in that work, the generality of our results leads to richer connections with contemporary problems in Fourier restriction theory.

In what follows we use the standard parametrisation of the grassman manifold $\M_{k,n}$ of $k$-planes in $\mathbb{R}^n$ by a $k$-dimensional subspace $\pi$ and a translation parameter $y\in\pi^\perp$. This allows the $k$-plane transform to be written as
\begin{equation*}
T_{k,n} f(\pi,y)
=
\int_{\mathbb R^n}f(x)\delta_{\pi+\{y\}}(x)dx:=\int f(x+y)d\lambda_\pi(x),
\end{equation*}
where $\lambda_\pi$ is Lebesgue measure on $\pi$.

\section{Identities and applications}
Our main result here consists of two simple formulae for $T_{k,n}(|\widehat{gd\sigma}|^2)(\pi,y)$ that hold under the assumption that $S$ and $\pi$ satisfy a certain transversality condition. Notably, we see that $T_{k,n}$ is unable to distinguish $|\widehat{gd\sigma}|^2$ from $T_{n-k,n}^*\mu$ for a large family of measures $\mu$ on $\M_{n-k,n}$, again provided a suitable transversality condition is satisfied. Here $T_{n-k,n}^*$ denotes the adjoint $(n-k)$-plane transform on $\mathbb{R}^n$, 
\begin{equation}\label{e:KPlane}
T_{n-k,n}^*\mu(x)
=
\int_{\M_{n-k,n}}\delta_{\theta+\{z\}}(x)d\mu(\theta,z),
\end{equation}
which is of course a superposition of Dirac masses on $(n-k)$-planes. As we shall see shortly, this tomographic understanding of $|\widehat{gd\sigma}|^2$ is relevant to several questions in the restriction theory of the Fourier transform. For example (see the forthcoming Section \ref{Sect:stein}), it sheds some light on a variant of a longstanding conjecture of Stein \cite{S} concerning the manner in which Kakeya-type maximal operators might control Fourier extension operators.  In the setting of rather general submanifolds of $\mathbb{R}^n$ this (somewhat tentative) conjecture, which arose in discussions with Tony Carbery some years ago, states that if $S$ is a smooth (compact) $k$-dimensional submanifold of $\mathbb{R}^n$, then 
\begin{equation}\label{genSt}
\int_{\mathbb{R}^n}|\widehat{gd\sigma}(x)|^2w(x)dx\lesssim\int_S |g(\xi)|^2\sup_{y\in T_\xi S}T_{n-k,n}w((T_\xi S)^\perp,y)d\sigma(\xi)
\end{equation}
for all weight functions $w$.
A manifestly weaker form of \eqref{genSt}, generalising a conjecture attributed to Mizohata and Takeuchi (see \cite{BRV}), is the claim that 
\begin{equation}\label{genMT}
\int_{\mathbb{R}^n}|\widehat{gd\sigma}(x)|^2w(x)dx\lesssim 
\|T_{n-k,n}w\|_\infty\|g\|_2^2.
\end{equation}
We refer to \cite{BCSV}, \cite{CIW}, \cite{BBC3}, \cite{BN} and \cite{BGNO} for some further discussion and recent results  relating to \eqref{genSt} and \eqref{genMT}, which are commonly stated in the case that $S$ is a sphere. 

The family of measures $\mu$ with the claimed property, which of course depends on the function $g$, is constructed via measures $\nu$
on the tangent bundle $$TS=\{(\xi,y):\xi\in S, y\in T_\xi S\},$$ and specifically those whose pushforwards $\nu_S$ under the natural projection map $TS\ni(\xi,y)\mapsto \xi\in S$ are given by 
\begin{equation}\label{marg}
d\nu_S(\xi)=|g(\xi)|^2d\sigma(\xi).
\end{equation} 
Perhaps the simplest such measure $\nu$ is $d\nu(y,\xi)=\delta_0(y)|g(\xi)|^2dyd\sigma(\xi)$.
Finally, the family of measures $\mu$ that we seek consists of  the pushforwards of such $\nu$ under the (Gauss) map $$TS\ni(\xi,y)\mapsto ((T_\xi S)^\perp,y)\in\M_{n-k,n},$$ so that
\begin{align*}
\int_{\M_{n-k,n}}\varphi d\mu
&=
\int_{TS}\varphi((T_\xi S)^\perp,y) d\nu(\xi,y).
\end{align*}
We note that the support of $\mu$ is contained in the set of translates of the normal planes to $S$. 
\begin{theorem}\label{mainid}
Suppose that $S$ is a $k$-dimensional $C^1$ submanifold of $\mathbb{R}^n$ and $\pi$ is a $k$-dimensional subspace of $\mathbb{R}^n$ for which
\begin{equation}
T_\xi S\cap\pi^\perp=\{0\}\;\mbox{ for all }\; \xi\in S,
\tag{T}
\end{equation}
and
\begin{equation}
\langle\xi-\eta\rangle\cap\pi^\perp=\{0\}\;\mbox{ for all }\;\xi,\eta\in S.
\tag{GT}
\end{equation}
Then for any measure $\mu$ satisfying the above conditions,
\begin{equation}\label{id}
T_{k,n}(|\widehat{gd\sigma}|^2)(\pi,y)=
\int_S\frac{|g(\xi)|^2}{|(T_\xi S)^\perp\wedge\pi|}d\sigma(\xi)
=
T_{k,n}T_{n-k,n}^*(\mu)(\pi,y).
\end{equation}
\end{theorem}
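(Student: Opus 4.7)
The plan is to verify both equalities in \eqref{id} by direct computation, using condition (GT) to handle the first and condition (T) to handle the second; both reduce at the end to the common middle integral.

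\emph{First equality (via Plancherel on $\pi$).} Condition (T) makes $P_\pi : S \to \pi$ an immersion and condition (GT) makes it injective, so $S$ is globally the graph $\xi = \xi_\pi + \Phi(\xi_\pi)$ of a $C^1$ map $\Phi : U \to \pi^\perp$ over the open set $U := P_\pi(S) \subset \pi$, with surface element $d\sigma(\xi) = J(\xi_\pi)\,d\xi_\pi$. Since $x \in \pi$ and $y \in \pi^\perp$ are orthogonal to $\Phi(\xi_\pi)$ and $\xi_\pi$ respectively, one may rewrite
\begin{equation*}
\widehat{gd\sigma}(x+y) = \int_U G_y(\xi_\pi)\, e^{-2\pi i x \cdot \xi_\pi}\, d\xi_\pi,
\qquad
G_y(\xi_\pi) := g(\xi)\, J(\xi_\pi)\, e^{-2\pi i y \cdot \Phi(\xi_\pi)},
\end{equation*}
which is the Fourier transform of $G_y$ on $\pi \simeq \mathbb{R}^k$. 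Plancherel on $\pi$ then gives
\begin{equation*}
T_{k,n}(|\widehat{gd\sigma}|^2)(\pi,y) = \int_\pi |G_y(\xi_\pi)|^2\, d\xi_\pi = \int_U |g(\xi)|^2\, J(\xi_\pi)^2\, d\xi_\pi = \int_S |g(\xi)|^2\, J(\xi_\pi)\, d\sigma(\xi),
\end{equation*}
which is automatically $y$-independent. A short exterior-algebra computation identifies $J(\xi_\pi) = |\det(P_\pi|_{T_\xi S})|^{-1} = |(T_\xi S)^\perp \wedge \pi|^{-1}$; this is the Gram-type identity $\det(M^T M) = \det(I - B^T B)$, where $M$ is the matrix of $P_\pi|_V$ and $B$ encodes the inner products between orthonormal bases of $V^\perp$ and $\pi$, applied to $V = T_\xi S$. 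This yields the first equality in \eqref{id}.

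\emph{Second equality (via Fubini).} Expanding the adjoint and interchanging the order of integration,
\begin{equation*}
T_{k,n}T_{n-k,n}^*(\mu)(\pi,y) = \int_{\M_{n-k,n}} T_{k,n}(\delta_{\theta + \{z\}})(\pi,y)\, d\mu(\theta,z).
\end{equation*}
Condition (T) is equivalent, by a dimension count, to $\pi \cap (T_\xi S)^\perp = \{0\}$, so each $\theta$ in the support of $\mu$ is transverse to $\pi$; the affine $k$-plane $\pi + y$ and $(n-k)$-plane $\theta + z$ then meet in a single point, and a standard change of variables (equivalently, regularization of $\delta_{\theta+\{z\}}$ into a tube of width $\epsilon$ and passage to the limit) gives $T_{k,n}(\delta_{\theta+\{z\}})(\pi,y) = |\pi \wedge \theta|^{-1}$, independently of $y$ and $z$. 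Finally, unfolding $\mu$ as the pushforward of $\nu$ on $TS$ under $(\xi,y) \mapsto ((T_\xi S)^\perp, y)$, the $y$-integration is trivial since the integrand does not depend on $y$, and the marginal condition $d\nu_S(\xi) = |g(\xi)|^2 d\sigma(\xi)$ gives
\begin{equation*}
T_{k,n}T_{n-k,n}^*(\mu)(\pi,y) = \int_{TS} \frac{d\nu(\xi,y)}{|\pi \wedge (T_\xi S)^\perp|} = \int_S \frac{|g(\xi)|^2}{|(T_\xi S)^\perp \wedge \pi|}\, d\sigma(\xi),
\end{equation*}
matching the middle expression and completing the proof.

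\emph{Main obstacle.} The first equality is essentially a single application of Plancherel once the global graph structure is in hand, and the exterior-algebra identification of the Jacobian is routine. The genuine technicality sits in the second equality: giving proper meaning to $T_{k,n}(\delta_{\theta+\{z\}})$, which nominally pairs Lebesgue measure on a $k$-plane against a Dirac mass on a transverse $(n-k)$-plane. The value $|\pi \wedge \theta|^{-1}$ is geometrically transparent as the transverse intersection density, but a clean justification requires either a test-function pairing on $\M_{k,n}$ or an explicit smoothing argument; it is here that one must take care.
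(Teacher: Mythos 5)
Your proposal is correct and follows essentially the same route as the paper: writing $S$ as a graph over $\pi$ via (T) and (GT), applying Plancherel on $\pi$, and identifying the Jacobian with $|(T_\xi S)^\perp\wedge\pi|^{-1}$ for the first equality; then Fubini together with the transverse-intersection computation $\int\delta_{\pi+\{y\}}\delta_{\theta+\{z\}}=|\theta\wedge\pi|^{-1}$ and the unfolding of $\mu$ through $\nu$ and the marginal condition for the second. The only cosmetic difference is that you phrase the Jacobian identity as a Gram determinant computation where the paper uses a Gram--Schmidt/wedge-product argument, and the point you flag as the main technicality (giving rigorous meaning to the pairing of the two Dirac measures) is handled in the paper at the same level of formality, by an affine change of variables.
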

Before turning to the context of Theorem \ref{mainid} we make some clarifying remarks. 
Regarding the wedge product in \eqref{id}, given any two transverse subspaces $V, W$ of $\mathbb{R}^n$ of complementary dimensions $\ell$ and $m$, we define $|V\wedge W|$ to be $|v_1\wedge\cdots\wedge v_\ell\wedge w_1\wedge\cdots\wedge w_m|$, where $\{v_j\}$, $\{w_j\}$ are orthonormal bases of $V$ and $W$ respectively. Equivalently,
$$
|U\wedge V|:=\int_V\int_Ue^{-\pi|u+v|^2}dudv,
$$
which of course has the advantage of being explicitly well-defined.

The conditions (T) and (GT) are transversality conditions relating $S$ and $\pi$, and both are necessary for \eqref{id} to hold -- a fact that we establish in Section \ref{sect:opt}. As a consequence of this, our expressions for the tomographic data $T_{k,n}(|\widehat{gd\sigma}|^2)$ do not appear to allow $|\widehat{gd\sigma}|^2$ to be reconstructed, at least directly. Nevertheless, as we have indicated earlier, Theorem \ref{mainid} does have a number of direct applications to the theory of extension operators, and we illustrate this with three examples in Sections \ref{SS1}--\ref{SS3} below.

The condition (GT) guarantees that $S$ intersects any translate of $\pi^\perp$ in at most one point, allowing it to be viewed as a graph of a function $\phi$ over $\pi$. The condition (T) stipulates that all tangent spaces to $S$ meet $\pi^\perp$ transversely, and ensures that this function $\phi$ is $C^1$.
Specifically, if $U\subseteq \pi$ is the orthogonal projection of $S$ onto $\pi$, and $u\in U$, then $\phi(u)$ is the unique element of the set $S\cap (\{u\}+\pi^\perp)-\{u\}$. By construction, $\phi:U\rightarrow\pi^\perp$, and 
\begin{equation}\label{graphoverpi}
S=\{u+\phi(u):u\in U\}.
\end{equation}

The conditions (T) and (GT) are closely related. The local condition (T) may be viewed as a limiting (or infinitesimal) form of the global condition (GT) as $\eta$ approaches $\xi$. Under various assumptions on $S$ one of these conditions may be seen to imply the other. For example, if $k=1$ and $S$ is connected (making $S$ a curve), an application of the mean value theorem reveals that (T)$\implies$(GT). In general, however, neither of these conditions implies the other, even if $S$ is connected. Helical surfaces provide simple examples for which (T) is satisfied while (GT) is not. On the other hand, curves in the plane with a point of inflection can satisfy (GT) but not (T). 

Of course if $S$ has dimension or codimension 1, then Theorem \ref{mainid} involves the \textit{Radon transform} $R:=T_{n-1,n}$ and \textit{$X$-ray transform} $X:=T_{1,n}$ respectively. Specifically, for $k=1$ the identity \eqref{id} becomes
\[
X(|\widehat{gd\sigma}|^2)(\omega,v)
=
\int_S\frac{|g(\xi)|^2}{|\tau(\xi)\cdot \omega|} d\sigma(\xi)=XR^*\mu(\omega,v),
\]
where $\tau(\xi)$ denotes a unit tangent to $S$. Here we are identifying one-dimensional subspaces $\pi$ with vectors $\omega\in\mathbb{S}^{n-1}$.
On the other hand, for $k=n-1$ the identity \eqref{id} becomes
\[
R(|\widehat{gd\sigma}|^2)(\omega,t)
=
\int_S\frac{|g(\xi)|^2}{|v(\xi)\cdot \omega|} d\sigma(\xi)=RX^*\mu(\omega,t),
\]
where $v(\xi)$ denotes a unit normal to $S$. Here we have indulged a slightly more serious abuse of notation by reparametrising hyperplanes by a normal vector $\omega\in\mathbb{S}^{n-1}$ and a distance $t$ from the origin.

The first identity in \eqref{id}, and in particular its independence of the translation parameter $y\in\pi^\perp$, may be interpreted as a \textit{conservation law}, generalising certain energy conservation properties of dispersive PDE, such as the time-dependent Schr\"odinger equation. This perspective is best understood in the setting of \textit{parametrised extension operators}, where the submanifold $S$ is parametrised by a $C^1$ injective map $\Sigma:U\rightarrow\mathbb{R}^n$. Here $U$ is a subset of $\mathbb{R}^k$, and the (parametrised) extension operator associated to $\Sigma$ is given by
$$
Ef(x)=\int_U e^{-2\pi ix\cdot\Sigma(\xi)}f(\xi)d\xi,
$$
where $x\in\mathbb{R}^n$. A simple computation reveals that $Ef=\widehat{gd\sigma}$, where $f$ and $g$ are related by $$f(\xi)=\left|\frac{\partial\Sigma}{\partial \xi_1}\wedge\cdots\wedge\frac{\partial\Sigma}{\partial \xi_k}\right|g(\Sigma(\xi)),$$ and the first of the two identities in \eqref{id} becomes
\begin{equation}\label{idpara}
T_{k,n}(|Ef|^2)(\pi,y)=
\int_U\frac{|f(\xi)|^2}{\left|\frac{\partial\Sigma}{\partial \xi_1}\wedge\cdots\wedge\frac{\partial\Sigma}{\partial \xi_k}\wedge\pi^\perp\right|}d\xi.
\end{equation}
In the particular case where $k=n-1$, $U=\mathbb{R}^{n-1}$, $\Sigma(\xi)=(\xi,|\xi|^2)$, and $(x,t)\in\mathbb{R}^{n-1}\times\mathbb{R}$, then $u(x,t):=Ef(x,t)$ solves the \textit{Schr\"odinger equation} $i\partial_tu=\Delta_x u$ with initial data $u(x,0)=\widehat{f}(x)$. Taking $\pi$ to be purely spatial -- that is the horizontal hyperplane at height $t$ -- the identity \eqref{idpara} reduces to the classical energy conservation law $\|u(\cdot,t)\|_{L^2_x}=\|u(\cdot,0)\|_{L^2_x}$. 

The first identity in Theorem \ref{mainid} has some noteworthy points of contact with the recent literature. The variety uncertainty principle in \cite{GWZ} (Lemma 1.10) draws a related conclusion when the role of the pair $(S,\pi)$ is taken by a suitably transverse pair of algebraic varieties $(Z_1,Z_2)$. A similar phenomenon is evident in \cite{CIW} (Corollary 1.7). See Section \ref{SS2} below for further clarification.

None of the results or arguments in this paper require that the measure $\mu$ be nonnegative. When $S$ is a hypersurface at least, admissible measures $\mu$ arise from certain \textit{generalised Wigner distributions} $\nu$ on the tangent bundle of $S$. These are of particular interest in optics (see \cite{Al}) and are rarely nonnegative. In these situations one actually has the \textit{pointwise} identity $|\widehat{gd\sigma}|^2=X^*\mu$, which sheds some rather different light on the Stein type conjecture \eqref{genSt}, but falls short of providing a resolution due to the lack of positivity. We refer the interested reader to  \cite{BGNO}.

We conclude this section with three concrete applications that serve to further contextualise Theorem \ref{mainid}. 
\subsection{Application 1: Restriction-Brascamp--Lieb inequalities}\label{SS1}
Theorem \ref{mainid}, combined with a well-known theorem of Barthe \cite{Ba}, is easily seen to imply the endpoint case of the \textit{restriction-Brascamp--Lieb inequality} (\cite{BBFL}, \cite{Zhang}, \cite{BB}), in the so-called rank-1 case. The restriction-Brascamp--Lieb inequality is a broad generalisation of the multilinear restriction inequality of \cite{BCT}, involving a collection of extension operators associated with submanifolds of various dimensions. Concretely, suppose that for each $1\leq j\leq m$, $\Sigma_j:U_j\rightarrow\mathbb{R}^n$ is a smooth parametrisation of a $n_j$-dimensional submanifold $S_j$ of $\mathbb{R}^n$ by a neighbourhood $U_j$ of the origin in $\mathbb{R}^{n_j}$, and let
$$
E_jg_j(\xi):=\int_{U_j}e^{-2\pi i\xi\cdot\Sigma_j(x)}g_j(x)dx
$$
be the associated (parametrised) extension operator. In this setting it is natural to conjecture that if the Brascamp--Lieb constant $\mbox{BL}(\linearfamily,\mathbf{p})$ is finite for the linear maps $\linearmapj:=(\mathrm{d}\Sigma_j(0))^*:\mathbb{R}^n\rightarrow\mathbb{R}^{n_j}$, then provided the neighbourhoods $U_j$ of $0$ are chosen to be small enough, the inequality
\begin{equation}\label{genmultrest}
\int_{\mathbb{R}^n}\prod_{j=1}^m|E_jg_j|^{2p_j}\lesssim\prod_{j=1}^m\|g_j\|_{L^2(U_j)}^{2p_j}
\end{equation}
holds for all $g_j\in L^2(U_j)$, $1\leq j\leq m$. We note that the weaker local inequality
\begin{equation*}
\int_{B(0,R)}\prod_{j=1}^m|E_jg_j|^{2p_j}\leq C_\varepsilon R^\varepsilon\prod_{j=1}^m\|g_j\|_{L^2(U_j)}^{2p_j},
\end{equation*}
involving an arbitrary $\varepsilon>0$ loss was established in \cite{BBFL} (see also \cite{BEsc} and \cite{Zhang} where the power loss is reduced to polylogarithmic). Here we make a modest contribution to this problem using a simple instance of Theorem \ref{mainid}.
\begin{corollary}
The global  inequality \eqref{genmultrest} holds whenever $n_1=\cdots=n_m=1$.
\end{corollary}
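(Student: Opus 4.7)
The plan is to combine Theorem \ref{mainid} in the case $k=1$ with Barthe's theorem \cite{Ba} on the rank-one Brascamp--Lieb inequality. When $n_j=1$ for all $j$, each submanifold $S_j$ is a curve, the linear maps $L_j=(\mathrm{d}\Sigma_j(0))^*$ are rank-one functionals $L_j\xi = \Sigma_j'(0)\cdot\xi$, and Theorem \ref{mainid} specialises to an identity for the X-ray transform of $|E_j g_j|^2$ perfectly suited to the input format of Barthe's theorem.

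First, for each $j$ set $\tau_j=\Sigma_j'(0)/|\Sigma_j'(0)|$ and $\pi_j=\mathbb{R}\tau_j$. After shrinking the neighbourhood $U_j$ of the origin if necessary, the transversality conditions (T) and (GT) hold for $S_j$ and $\pi_j$, and Theorem \ref{mainid} yields, uniformly for $v\in\tau_j^\perp$,
\begin{equation*}
\int_{\mathbb{R}}|E_j g_j(v+t\tau_j)|^2\,dt=\int_{U_j}\frac{|g_j(x)|^2}{|\Sigma_j'(x)\cdot\tau_j|}\,dx \lesssim \|g_j\|_{L^2(U_j)}^2,
\end{equation*}
the implicit constant depending only on the $C^1$ data of $\Sigma_j$ on $U_j$. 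Moreover, the second identity in Theorem \ref{mainid} shows that $|E_j g_j|^2$ is X-ray-indistinguishable from the superposition $T^*_{n-1,n}\mu_j$ of Dirac measures on translates of the normal hyperplanes to $S_j$, for any admissible $\mu_j$ whose marginal on $S_j$ is $|g_j|^2\,d\sigma_j$.

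Next, Barthe's theorem \cite{Ba} asserts that the hypothesis $\mathrm{BL}(\linearfamily,\mathbf{p})<\infty$ is equivalent to the rank-one Brascamp--Lieb inequality
\begin{equation*}
\int_{\mathbb{R}^n}\prod_{j=1}^m f_j(\tau_j\cdot\xi)^{p_j}\,d\xi \leq \mathrm{BL}(\linearfamily,\mathbf{p})\prod_{j=1}^m\|f_j\|_{L^1(\mathbb{R})}^{p_j},
\end{equation*}
valid for all non-negative $f_j$ on $\mathbb{R}$; this of course already gives \eqref{genmultrest} in the flat case where each $S_j$ is a linear subspace (via Plancherel applied to $f_j=|\widehat{g_j}|^2$).

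Finally, \eqref{genmultrest} is deduced from these two ingredients by choosing $\mu_j$ in Theorem \ref{mainid} to be concentrated at the tangent line $T_0 S_j=\mathbb{R}\tau_j$, so that $T^*_{n-1,n}\mu_j$ collapses to a function of the single variable $\tau_j\cdot\xi$ whose $L^1$-norm is comparable to $\|g_j\|_{L^2}^2$; Barthe's inequality applied to these tomographic replacements then yields \eqref{genmultrest} with constant $\mathrm{BL}(\linearfamily,\mathbf{p})$ (up to factors controlled by the $C^1$ data of the $\Sigma_j$). The main obstacle lies in making this replacement precise: Theorem \ref{mainid} provides equality only at the level of X-ray transforms, so passing from tomographic equivalence of individual $|E_j g_j|^2$ and $T^*_{n-1,n}\mu_j$ to a bound on the pointwise product $\prod_j|E_j g_j|^{2p_j}$ will require either a regularisation/approximation argument, or a direct verification that the two sides agree when tested against the Gaussian extremisers of Barthe's inequality.
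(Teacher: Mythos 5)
Your opening step is fine: with $\pi_j=\mathbb{R}\tau_j$ and $U_j$ small, \eqref{idpara} does give $\int_{\mathbb{R}}|E_jg_j(v+t\tau_j)|^2dt=\int_{U_j}|g_j|^2/|\Sigma_j'\cdot\tau_j|\lesssim\|g_j\|_2^2$ uniformly in $v\in\tau_j^\perp$. But the rest of the argument has a genuine gap, which you partly acknowledge yourself, and it is not a technicality that a regularisation argument will fix. First, the replacement you propose is not available: the admissible measures $\mu_j$ in Theorem \ref{mainid} are pushforwards under the Gauss map of measures $\nu_j$ on $TS_j$ whose marginal on $S_j$ is forced to be $|g_j|^2d\sigma_j$. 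The only freedom is in the translation parameter $y\in T_\xi S_j$; the \emph{directions} of the hyperplanes in the support of $\mu_j$ are pinned to be $(T_\xi S_j)^\perp$ as $\xi$ ranges over $\operatorname{supp}g_j$. So $T^*_{n-1,n}\mu_j$ cannot be made a function of the single variable $\tau_j\cdot\xi$ unless $S_j$ is a straight line. Second, and more fundamentally, knowing that $|E_jg_j|^2$ and $T^*_{n-1,n}\mu_j$ have the same $k$-plane transform for transverse $\pi$ tells you nothing directly about $\int\prod_j|E_jg_j|^{2p_j}$: the quantity you need to bound is a pointwise product of the $|E_jg_j|^2$, not a pairing of each one separately against a fixed weight. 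Deducing weighted or multilinear control of $|\widehat{gd\sigma}|^2$ from tomographic data about it is precisely the content of the (open) Stein/Mizohata--Takeuchi-type conjectures \eqref{genSt} and \eqref{genMT} discussed in Application 2 of this paper, so your final step is assuming something at least as hard as what is being proved.

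The paper's route is different and avoids this entirely. It uses Barthe's theorem not as the functional rank-one Brascamp--Lieb inequality but as the \emph{finiteness characterisation}: $\mathrm{BL}(\linearfamily,\mathbf{p})<\infty$ iff $\mathbf{p}$ lies in the convex hull of the indicator vectors $\mathbf{p}^J$ of subsets $J\subset\{1,\dots,m\}$ with $|J|=n$ for which $\{v_j\}_{j\in J}$ is a basis of $\mathbb{R}^n$. Writing $\mathbf{p}=\sum_J\lambda_J\mathbf{p}^J$ and applying the multilinear H\"older inequality reduces \eqref{genmultrest} to the single estimate $\bigl\|\prod_{j\in J}E_jg_j\bigr\|_2\lesssim\prod_{j\in J}\|g_j\|_2$ for each basis subcollection $J$. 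This $L^2$ quantity \emph{is} a single $k$-plane transform: it equals (up to a constant) $T_{n,n^2}(|\widehat{gd\sigma_S}|^2)(\pi,0)$ for the product manifold $S=\prod_{j\in J}S_j\subset\mathbb{R}^{n^2}$ and the diagonal $n$-plane $\pi$, and the basis condition on $\{v_j\}_{j\in J}$ gives (T) and (GT) after shrinking the $U_j$. So Theorem \ref{mainid} applies directly (or one argues by a change of variables and Plancherel). If you want to salvage your approach, the lesson is that the convexity/interpolation step must happen \emph{before} you invoke the tomographic identity, so that the identity is only ever applied to a full $L^2$ norm of an $n$-fold product, never to a fractional-power product.
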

\begin{proof}
We begin by observing that the linear map $\linearmapj:=(\mathrm{d}\Sigma_j(0))^*:\mathbb{R}^n\rightarrow\mathbb{R}$ satisfies $\linearmapj x=\langle x, v_j\rangle$, where $v_j$ is a tangent vector to $S_j$ at the point $\Sigma_j(0)$. By Barthe's finiteness characterisation \cite{Ba}, the Brascamp--Lieb constant $\mbox{BL}(\linearfamily,\mathbf{p})$ is finite if and only if $\mathbf{p}$ belongs to the convex hull of the points $\mathbf{p}^J$, where $J$ denotes a subset of $[1,m]:=\{1,\hdots,m\}$ of cardinality $n$, the exponent $p^J_j=\mathbf{1}_J(j)$, and $J$ is such that the set $\{v_j:j\in J\}$ forms a basis for $\mathbb{R}^n$. Here $\mathbf{1}_J:[0,m]\rightarrow \{0,1\}$ denotes the indicator function of $J$. 
Now, since $\mbox{BL}(\linearfamily,\mathbf{p})$ is finite, there are nonnegative scalars $\lambda_J$ such that 
$$
\mathbf{p}=\sum_{J}\lambda_J \mathbf{p}^J\;\;\mbox{ and }\;\;\sum_J\lambda_J=1.
$$
By the $m$-linear H\"older inequality it follows that
\begin{eqnarray*}
\begin{aligned}
\int_{\mathbb{R}^n}\prod_{j=1}^m|E_jg_j|^{2p_j}=\int_{\mathbb{R}^n}\prod_J\Bigl(\prod_{j=1}^m|E_jg_j|^{2p^J_j}\Bigr)^{\lambda_J}=
\int_{\mathbb{R}^n}\prod_J\Bigl(\prod_{j\in J}|E_jg_j|^{2}\Bigr)^{\lambda_J}
\leq\prod_{J}\Bigl\|\prod_{j\in J}E_jg_j\Bigr\|_2^{2\lambda_J},
\end{aligned}
\end{eqnarray*}
and so it remains to show that
\begin{equation}\label{remains}
\Bigl\|\prod_{j\in J}E_jg_j\Bigr\|_2\lesssim\prod_{j\in J}\|g_j\|_2
\end{equation}
for each $J$. Provided the neighbourhoods $U_j$ are chosen small enough, this elementary inequality is a straightforward consequence of Theorem \ref{mainid} (in its parametrised form \eqref{idpara}) applied to the cartesian product $$S:=\prod_{j\in J} S_j\subset (\mathbb{R}^n)^n\cong \mathbb{R}^{n^2}$$ 
with the (diagonal) subspace $\pi=\{(x_1,\hdots,x_n)\in (\mathbb{R}^n)^n: x_1=\cdots=x_n\}$. It is also instructive to provide a direct proof of \eqref{remains} as it merely requires a change of variables and Plancherel's theorem.
\end{proof}

\subsection{Application 2: Weighted $L^2$ norm identities}\label{Sect:stein}\label{SS2}
As we have already discussed, Theorem \ref{mainid} sheds some light on the very general formulation of Stein's conjecture for extension operators \eqref{genSt}.

In the particular case that $w$ lies in the image of $T_{k,n}^*$, and satisfies a certain transversality condition, the conjectural inequality \eqref{genSt} follows from Theorem \ref{mainid}, where it is in fact \textit{an identity}.  
\begin{corollary} If $w=T_{k,n}^*u$ for some $u:\M_{k,n}\rightarrow [0,\infty)$ whose support is transverse to $S$ in the sense of \rm{(T)} and \rm{(GT)}, then
\begin{equation}\label{genStid}
\int_{\mathbb{R}^n}|\widehat{gd\sigma}(x)|^2w(x)dx=\int_S |g(\xi)|^2\sup_{y\in T_\xi S}T_{n-k,n}w((T_\xi S)^\perp,y)d\sigma(\xi).
\end{equation}
\end{corollary}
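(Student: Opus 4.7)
The plan is to reduce \eqref{genStid} to Theorem \ref{mainid} via two applications of the duality pairing between $T_{k,n}$ and $T_{k,n}^*$, exploiting the fact that under the transversality hypotheses on the support of $u$, the quantity $T_{n-k,n}w((T_\xi S)^\perp, y)$ turns out to be independent of $y \in T_\xi S$; the supremum in \eqref{genStid} will then collapse to its common value.

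First, write $w = T_{k,n}^* u$ and use adjointness to pass to the Grassmannian side:
\[
\int_{\mathbb{R}^n}|\widehat{gd\sigma}|^2 w\,dx = \int_{\M_{k,n}} T_{k,n}(|\widehat{gd\sigma}|^2)(\pi,y)\, u(\pi,y)\,d\pi dy.
\]
Since $u$ is supported on pairs $(\pi, y)$ with $\pi$ satisfying (T) and (GT) relative to $S$, Theorem \ref{mainid} applies pointwise in the integrand, and substituting its first identity together with Fubini/Tonelli (everything being nonnegative) rewrites this as
\[
\int_S |g(\xi)|^2 \left(\int_{\M_{k,n}} \frac{u(\pi,y)}{|(T_\xi S)^\perp\wedge\pi|}\,d\pi dy\right) d\sigma(\xi).
\]

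It then remains to identify the inner bracket with $T_{n-k,n}w((T_\xi S)^\perp, y_0)$ for every $y_0 \in T_\xi S$. This is a short change-of-variables calculation applied to $T_{n-k,n}(T_{k,n}^* u)((T_\xi S)^\perp, y_0) = \int_{(T_\xi S)^\perp} T_{k,n}^* u(v+y_0)\,dv$: unfolding $T_{k,n}^*$ via its adjointness formula $T_{k,n}^*u(x) = \int u(\pi, P_{\pi^\perp}x)\,d\pi$, swapping the order of integration, and exploiting the fact that under condition (T) the restricted projection $P_{\pi^\perp}|_{(T_\xi S)^\perp}:(T_\xi S)^\perp \to \pi^\perp$ is a linear isomorphism with Jacobian equal to $|(T_\xi S)^\perp \wedge \pi|$, the inner $v$-integral collapses to that scalar times $\int_{\pi^\perp} u(\pi,y)\,dy$, with no residual $y_0$-dependence. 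Hence $T_{n-k,n}w((T_\xi S)^\perp, y)$ is constant on $T_\xi S$, the supremum in \eqref{genStid} is redundant and equals this common value, and \eqref{genStid} follows.

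The only technical input is the Jacobian identity $|\det(P_{\pi^\perp}|_{(T_\xi S)^\perp})| = |(T_\xi S)^\perp\wedge\pi|$, a standard linear-algebra fact about transverse complementary subspaces that fits directly into the wedge-product formalism recalled just after Theorem \ref{mainid}; I expect this to be routine rather than the main obstacle. The conceptual heart of the argument is really the observation that Theorem \ref{mainid}, together with duality, forces $T_{n-k,n}w((T_\xi S)^\perp, \cdot)$ to be constant on $T_\xi S$, which is what trivialises the supremum appearing in the conjectural \eqref{genSt}.
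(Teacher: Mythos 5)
Your argument is correct, and it takes a mildly different route from the paper's. The paper's proof also opens with the duality pairing $\int|\widehat{gd\sigma}|^2w=\langle T_{k,n}(|\widehat{gd\sigma}|^2),u\rangle$, but then invokes the \emph{second} identity in \eqref{id} to replace $T_{k,n}(|\widehat{gd\sigma}|^2)$ by $T_{k,n}T_{n-k,n}^*\mu$ for an admissible measure $\mu$, moves the adjoint across once more to obtain $\int T_{n-k,n}w\,d\mu$, and finally unwinds the pushforward structure of $\mu$ and the marginal condition \eqref{marg} to arrive at $\int_S|g(\xi)|^2\,T_{n-k,n}w((T_\xi S)^\perp,\cdot)\,d\sigma(\xi)$. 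You instead use the \emph{first} identity in \eqref{id} together with Tonelli, and then verify directly that the resulting bracket $\int u(\pi,y)\,|(T_\xi S)^\perp\wedge\pi|^{-1}d\pi dy$ equals $T_{n-k,n}T_{k,n}^*u((T_\xi S)^\perp,y_0)$ for every $y_0$; this computation is precisely the content of the intertwining formula \eqref{angle} in Section \ref{sect:pf}, your projection--Jacobian identity $|\det(P_{\pi^\perp}|_{(T_\xi S)^\perp})|=|(T_\xi S)^\perp\wedge\pi|$ being equivalent to it, and condition (T) being exactly what makes the restricted projection invertible. So both proofs rest on the same two ingredients --- duality and the $|\theta\wedge\pi|^{-1}$ kernel of $T_{n-k,n}T_{k,n}^*$ --- but yours bypasses the measure $\mu$ altogether, which has the virtue of making fully explicit the constancy of $T_{n-k,n}w((T_\xi S)^\perp,\cdot)$ that the paper only asserts; the paper's route, in turn, exhibits the corollary as a direct consequence of the tomographic indistinguishability of $|\widehat{gd\sigma}|^2$ from $T_{n-k,n}^*\mu$. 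One cosmetic slip: with the Jacobian as you state it, the change of variables produces the factor $|(T_\xi S)^\perp\wedge\pi|^{-1}$ (not $|(T_\xi S)^\perp\wedge\pi|$) in front of $\int_{\pi^\perp}u(\pi,y)\,dy$, which is of course the factor needed to match your bracket.
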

A similar observation was made in the case $S=\mathbb{S}^{n-1}$ in \cite{BN}; see also \cite{CIW} for some related results.
\begin{proof}
Observe first that by an application of \eqref{id},
$$
\int_{\mathbb{R}^n}|\widehat{gd\sigma}|^2w=\langle T_{k,n}(|\widehat{gd\sigma}|^2),u\rangle=\langle T_{k,n}T_{n-k,n}^*\mu, u\rangle
=\int_{\M_{k,n}}T_{n-k,n}w\;d\mu.
$$
Here $\mu$ may be any admissible measure on $\M_{n-k,n}$ -- that is, the pushforward of a measure $\nu$ on $TS$ satisfying the marginal condition \eqref{marg}; for example $d\nu(\xi, y)=|g(\xi)|^2\delta_0(y)dyd\sigma(\xi)$.
By the transversality hypotheses on the support of $u$, it follows that $T_{n-k,n}w(\pi,y)$ is independent of $y$, and so
\begin{eqnarray*}
\begin{aligned}
\int_{\mathbb{R}^n}|\widehat{gd\sigma}|^2w&=\int_{\M_{k,n}}\sup_{z\in\pi^\perp}\left(T_{n-k,n}w(\pi,z)\right)d\mu(\pi,y)\\&=\int_{TS}\sup_{z\in T_\xi S}\left(T_{n-k,n}w((T_\xi S)^\perp,z)\right)d\nu(\xi,y)\\&=\int_S |g(\xi)|^2\sup_{y\in T_\xi S}T_{n-k,n}w((T_\xi S)^\perp,y)d\sigma(\xi),
\end{aligned}
\end{eqnarray*}
by the definition of $\mu$ and the marginal condition \eqref{marg}. 
\end{proof}

\subsection{Application 3: Convolution identities for extension operators}\label{SS3}
Theorem \ref{mainid} is quickly seen to imply the following multilinear extension identity of Iliopoulou and the first author in \cite{BI}.
\begin{corollary}\label{cor:BI}
Suppose $S_1,\hdots,S_n$ are smooth codimension-1 submanifolds of $\mathbb{R}^n$. Suppose further that the volume form $v_1(\xi_1)\wedge\cdots\wedge v_n(\xi_n)$ is nonvanishing, where $v_j(\xi_j)$ is a unit normal to $S_j$ for each $\xi_j\in S_j$ and $1\leq j\leq n$. Then,
$$
|\widehat{g_1d\sigma_1}|^2*\cdots*|\widehat{g_nd\sigma_n}|^2
\equiv
\int_{S_1\times\cdots\times S_n}\frac{|g_1(\xi_1)|^2\cdots|g_n(\xi_n)|^2}{|v_1(\xi_1)\wedge\cdots\wedge v_n(\xi_n)|}d\sigma_1(\xi_1)\cdots d\sigma_n(\xi_n).
$$
\end{corollary}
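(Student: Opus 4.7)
My plan is to apply Theorem \ref{mainid} to the product submanifold $S := S_1\times\cdots\times S_n\subset(\mathbb{R}^n)^n\cong\mathbb{R}^{n^2}$, equipped with the product measure $d\sigma = d\sigma_1\cdots d\sigma_n$ and the tensorised density $g(\xi_1,\ldots,\xi_n)=g_1(\xi_1)\cdots g_n(\xi_n)$. The key observation is that the extension operator factorises, $\widehat{gd\sigma}(x_1,\ldots,x_n)=\prod_{j=1}^n\widehat{g_jd\sigma_j}(x_j)$, so that $|\widehat{gd\sigma}|^2$ is a tensor product of $|\widehat{g_jd\sigma_j}|^2$. The submanifold $S$ has dimension $k:=n(n-1)$ in $\mathbb{R}^{n^2}$, which matches the dimension of the ``anti-diagonal'' subspace
$$
\pi=\{(x_1,\ldots,x_n)\in(\mathbb{R}^n)^n : x_1+\cdots+x_n=0\},
$$
whose orthogonal complement $\pi^\perp=\{(z,\ldots,z):z\in\mathbb{R}^n\}$ is the diagonal copy of $\mathbb{R}^n$.

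Next I would identify the left-hand side of the claimed identity with $T_{k,n^2}(|\widehat{gd\sigma}|^2)(\pi,y)$. Indeed, integration over the affine subspace $\pi+\{y\}$ is, up to a fixed Jacobian factor of $n^{n/2}$ coming from the parametrisation $(\tilde x_1,\ldots,\tilde x_{n-1})\mapsto(\tilde x_1,\ldots,\tilde x_{n-1},w-\tilde x_1-\cdots-\tilde x_{n-1})$, exactly the convolution $(|\widehat{g_1d\sigma_1}|^2*\cdots*|\widehat{g_nd\sigma_n}|^2)(w)$ evaluated at the point $w\in\mathbb{R}^n$ determined by $y\in\pi^\perp$.

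To invoke Theorem \ref{mainid} I must verify the transversality conditions and compute the wedge factor. For (T), note that $T_\xi S=\prod_j T_{\xi_j}S_j$ so $(T_\xi S)^\perp$ is spanned by $(0,\ldots,v_j(\xi_j),\ldots,0)$ placed in the $j$-th block; a vector $(z,\ldots,z)\in\pi^\perp$ lies in $T_\xi S$ iff $z\perp v_j(\xi_j)$ for every $j$, and this forces $z=0$ precisely when $v_1(\xi_1)\wedge\cdots\wedge v_n(\xi_n)\neq0$, which is the hypothesis. The condition (GT) requires that no nontrivial common translate $\xi_j-\eta_j\equiv z$ can occur with $\xi_j,\eta_j\in S_j$; this is implied by (T) after a standard localisation of each $S_j$ (which does not restrict generality here since convolution is supported in sums of pieces). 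Finally, computing the Jacobian of orthogonal projection from $(T_\xi S)^\perp$ onto $\pi^\perp$ via the orthonormal basis $q_k=n^{-1/2}(e_k,\ldots,e_k)$ yields
$$
|(T_\xi S)^\perp\wedge\pi|=n^{-n/2}|v_1(\xi_1)\wedge\cdots\wedge v_n(\xi_n)|.
$$

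Substituting these into the identity \eqref{id} of Theorem \ref{mainid}, the two factors of $n^{n/2}$ cancel and one obtains the claimed identity, with the right-hand side being manifestly independent of $w$ (which is why $\equiv$ is appropriate). The only genuine obstacle is the global validity of (GT), which I would address by first establishing the identity on small pieces where (T) forces (GT), and then either invoking the compactness of the $S_j$ together with continuity and analytic continuation of both sides in $w$, or simply noting that the right-hand side depends only on the pointwise data $|g_j|^2d\sigma_j$ and so the identity extends by a partition-of-unity and linearity argument in each $g_j$.
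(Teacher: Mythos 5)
Your reduction is exactly the one the paper uses: form the product manifold $S=S_1\times\cdots\times S_n\subset\mathbb{R}^{n^2}$ with $g=g_1\otimes\cdots\otimes g_n$, identify the convolution with $n^{-n/2}T_{n(n-1),n^2}(|\widehat{gd\sigma}|^2)(\pi,\cdot)$ for the anti-diagonal subspace $\pi$, observe that $(T_\xi S)^\perp$ is spanned by the block vectors $(0,\hdots,v_j(\xi_j),\hdots,0)$ so that (T) amounts to the nonvanishing of $v_1(\xi_1)\wedge\cdots\wedge v_n(\xi_n)$, and compute $|(T_\xi S)^\perp\wedge\pi|=n^{-n/2}|v_1(\xi_1)\wedge\cdots\wedge v_n(\xi_n)|$. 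All of this matches the paper's proof (which evaluates at the origin and invokes modulation invariance rather than working at a general point $w$ --- an immaterial difference), and your verification of (T) is correct.

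The genuine problem is your proposed treatment of (GT). For the product manifold, (GT) asks that no $z\neq0$ arise simultaneously as a chord $\xi_j-\eta_j$ of every $S_j$; this is a global, bilinear condition and does not follow from the pointwise nonvanishing of the volume form of normals except in special situations (e.g. connected curves in the plane, via the mean value theorem, as discussed after Theorem \ref{mainid}). Both of your suggested repairs fail. A partition of unity in each $g_j$ does not reduce matters to small caps, because the left-hand side is quadratic, not linear, in each $g_j$: writing $g_j=\sum_\alpha g_j^\alpha$ produces cross terms $\widehat{g_j^\alpha d\sigma_j}\,\overline{\widehat{g_j^\beta d\sigma_j}}$ between distant caps, and these are precisely the contributions that (GT) is designed to exclude --- Section \ref{sect:opt} shows that when (GT) fails such cross terms create genuine dependence on the translation parameter, i.e.\ the convolution would not be constant. ``Analytic continuation in $w$'' likewise has nothing to continue, since the local identities concern different (truncated) data, and the right-hand side depending only on $|g_j|^2$ does not help for the same quadratic-versus-linear reason. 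So you must either prove that the corollary's hypothesis forces (GT) for the product (which requires an argument, and for possibly disconnected hypersurfaces appears to need an additional assumption), or impose it. To be fair, the paper's own proof is silent on this point, so you have identified a real subtlety --- but the patch you offer for it is not valid.
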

\begin{proof}
By modulation invariance it suffices to prove the claimed identity at the origin. Observe that
$$
|\widehat{g_1d\sigma_1}|^2*\cdots*|\widehat{g_nd\sigma_n}|^2(0)=n^{-n/2}T_{n(n-1),n^2}(|\widehat{gd\sigma_S}|^2)(\pi,0),
$$
where $S=S_1\times\cdots\times S_n$, $g=g_1\otimes\cdots\otimes g_n$ and $$\pi=\Bigl\{x=(x_1,\hdots,x_n)\in(\mathbb{R}^n)^n:\sum_{j=1}^n x_j=0\Bigr\}.$$ Finally, a straightforward linear algebra argument reveals that
$$
|(T_\xi S)^\perp\wedge\pi|
=
n^{-n/2}
|v_1(\xi_1)\wedge\cdots\wedge v_n(\xi_n)|,
$$
allowing the desired conclusion to follow from Theorem \ref{mainid}.
\end{proof}
As may be expected Corollary \ref{cor:BI} may be generalised to submanifolds $S_1,\hdots,S_m$ or varying dimensions, provided certain natural arithmetic and transversality conditions are satisfied.
\section{The proof of Theorem \ref{mainid}}\label{sect:pf}
We begin with the first identity in \eqref{id}, and use \eqref{graphoverpi} to write
$$
\widehat{gd\sigma}(x)=\int_U e^{ix\cdot(\Sigma(u))}g(\Sigma(u))J(u)d\lambda_\pi(u),
$$
where $J(u)=\left|\frac{\partial\Sigma}{du_1}\wedge\cdots\wedge\frac{\partial\Sigma}{\partial u_k}\right|$ and $\Sigma(u)=u+\phi(u)$.
Consequently, by Plancherel's theorem on $\pi$,
\begin{align*}
T_{k,n}(|\widehat{gd\sigma}|^2)(\pi,y)
&=\int_\pi|\widehat{gd\sigma}(x+y)|^2d\lambda_\pi(x)\\&=
\int_{\pi}\left|\int_U e^{-2\pi i(x\cdot u+ y\cdot\phi(u))}g(u+\phi(u))J(u)du\right|^2d\lambda_\pi(x)\\
&=\int_\pi \left|e^{-2\pi iy\cdot\phi(u)}g(u+\phi(u))J(u)\right|^2d\lambda_\pi(u)\\
&=\int_\pi \left|g(u+\phi(u))J(u)^{1/2}\right|^2J(u)d\lambda_\pi(u)\\
&=\int_S |g(\xi)|^2J(u(\xi))d\sigma(\xi),
\end{align*}
where $u(\xi)$ is the orthogonal projection of $\xi\in S$ onto $\pi$. 
Since $|(T_{\Sigma(u)} S)^\perp\wedge\pi|=|T_{\Sigma(u)} S\wedge\pi^\perp|$ it therefore remains to show that
\begin{equation}\label{lastbit}
J(u)=\frac{1}{|T_{\Sigma(u)} S\wedge\pi^\perp|}.
\end{equation}
In order to establish \eqref{lastbit} we may, by the rotation-invariance of the statement of Theorem \ref{mainid}, assume that $\pi=\langle e_1,\hdots,e_k\rangle$, where $e_1,\hdots, e_n$ denote the standard basis vectors of $\mathbb{R}^n$. We observe first that since $\phi:\pi\rightarrow\pi^\perp$,
\begin{equation}\label{one}
\left|\frac{\partial\Sigma}{\partial u_1}\wedge\cdots\wedge\frac{\partial\Sigma}{\partial u_k}\wedge e_{k+1}\wedge\cdots\wedge e_n\right|=1.
\end{equation}
Next we construct orthogonal (as opposed to orthonormal) $v_1,\hdots,v_k\in T_{\Sigma(u)}S$ from $\frac{\partial\Sigma}{\partial u_1},\hdots, \frac{\partial\Sigma}{\partial u_k}$ by the Gram--Schmidt process, and observe that $$v_1\wedge\cdots \wedge v_k=\frac{\partial\Sigma}{\partial u_1}\wedge\cdots\wedge\frac{\partial\Sigma}{\partial u_k}.$$ Consequently,
$$
|v_1|\cdots |v_k|=\left|\frac{\partial\Sigma}{\partial u_1}\wedge\cdots\wedge\frac{\partial\Sigma}{\partial u_k}\right|,
$$
and so by \eqref{one},
$$
|T_{\Sigma(u)} S\wedge\pi^\perp|=\left|\frac{v_1}{|v_1|}\wedge\cdots\wedge\frac{v_k}{|v_k|}\wedge e_{k+1}\wedge\cdots\wedge e_n\right|=\frac{1}{J(u)}.
$$

Turning to the second identity in \eqref{id}, by \eqref{e:KPlane} we have
\begin{align*}
T_{k,n}T_{n-k,n}^* \mu (\pi,y)
&= 
\int_{\mathbb R^n} T^*_{n-k,n}\mu(x)\delta_{\pi+\{y\}}(x)dx\\
&=
\int_{\mathbb R^n}\left(\int_{\M_{n-k,n}}\delta_{\theta+\{z\}}(x)d\mu(\theta,z)\right)\delta_{\pi+\{y\}}(x)dx\\
&=
\int_{\M_{n-k,n}}
\left(
\int_{\mathbb R^n} 
\delta_{\pi+\{y\}}(x)
\delta_{\theta+\{z\}}(x)
dx
\right)
d\mu(\theta,z).
\end{align*}
Recalling that the transverse subspaces $\pi$ and $\theta$ are $k$-dimensional and $(n-k)$-dimensional respectively, an affine change of variables reveals that
\begin{equation}\label{angle}
\int_{\mathbb R^n} 
\delta_{\pi+\{y\}}(x)
\delta_{\theta+\{z\}}(x)
dx=\frac{1}{|\theta\wedge \pi|}
\end{equation}
for all $y\in\pi^\perp$ and $z\in\theta^\perp$. Hence,
\begin{align*}
T_{k,n}T_{n-k,n}^* \mu (\pi,y)&=
\int_{\M_{n-k,n}}\frac{d\mu(\theta,z)}{|\theta\wedge \pi|}\\
&=
\int_{TS}\frac{d\nu(\xi,y')}{|(T_\xi S)^\perp\wedge \pi|}\\
&=
\int_{S}\frac{d\nu_S(\xi)}{|(T_\xi S)^\perp\wedge\pi|}\\
&=
\int_{S}\frac{|g(\xi)|^2}{|(T_\xi S)^\perp\wedge\pi|}d\sigma(\xi),
\end{align*}
where in the final line we have used \eqref{marg}.
\section{Necessity of the transversality hypotheses}\label{sect:opt}

While the hypothesis (T) is clearly necessary for finiteness in \eqref{id}, the necessity of the global hypothesis (GT) is rather less aparent. Here we establish that if the data $T_k(|\widehat{gd\sigma}|^2)(\pi,y)$ is independent of $y$, as claimed by \eqref{id}, then (GT) must hold. To this end we fix a $k$-dimensional $C^1$ submanifold $S$ and a $k$-dimensional subspace $\pi$, and
assume that (GT) fails, that is, there exist $\xi_0,\eta_0 \in S$ such that $\xi_0 - \eta_0 \in \pi^\perp\backslash\{0\}$. 
	We then define $g_0:S\rightarrow\mathbb{R}$ by
	$
	g_0 = \chi_{C_\varepsilon(\xi_0)} + \chi_{C_\varepsilon(\eta_0)},
	$
	where $C_\varepsilon(a) := S\cap \{ \xi \in \mathbb{R}^n: |\xi - a|< \varepsilon\}$, and  $\varepsilon>0$ will be chosen later. 
	It will suffice to show that the directional derivative
	\begin{equation}\label{e:dirder}
	(\xi_0-\eta_0)\cdot\nabla_y \big[ T_{k,n}(|\widehat{g_0d\sigma}|^2)(\pi,\cdot) \big](0)
	=
	(\xi_0-\eta_0)\cdot\nabla_y \big[ \int_{\pi} |\widehat{g_0d\sigma}(x+y)|^2 d\lambda_{\pi}(x) \big]|_{y=0}
	\end{equation}
does not vanish for some $\varepsilon>0$.
	We remark that 
	\begin{equation}\label{e:Finite}
	\int_{\pi} |\widehat{gd\sigma}(x+y)|^2 d\lambda_{\pi}(x) < \infty 
	\end{equation}
	for all $y \in \pi^\perp$ and all $g \in L^2(S)$ thanks to (T). 
	In particular, \eqref{e:Finite} holds for $g=g_0$ and $g(\xi) = (\xi_0-\eta_0)\cdot \xi g_0(\xi)$, from which it follows that 
	\begin{equation}\label{e:FiniteDeri}
		\int_{\pi} \big|(\xi_0-\eta_0)\cdot\nabla_{\mathbb{R}^n} \big( |\widehat{g_0d\sigma}|^2 \big) (x)\big| d\lambda_{\pi}(x) < \infty.
	\end{equation}
	This justifies an interchange of derivative and integral in \eqref{e:dirder}, reducing matters to establishing that 
	\begin{equation}\label{e:dirder2}
		\int_{\pi} (\xi_0-\eta_0)\cdot\nabla_{\mathbb{R}^n} \big( |\widehat{g_0d\sigma}|^2 \big) (x) d\lambda_{\pi}(x)\not=0
	\end{equation}
for some $\varepsilon>0$.
	By the dominated convergence theorem, \eqref{e:FiniteDeri} also ensures that 
	\begin{equation}\label{e:ApproxDec}
		(\xi_0-\eta_0)\cdot\nabla_y \big[ \int_{\pi} |\widehat{g_0d\sigma}(x+y)|^2\, d\lambda_{\pi}(x) \big] |_{y=0}
		=
		\lim_{R\to \infty} 
		\int_{\pi} (\xi_0-\eta_0)\cdot\nabla_{\mathbb{R}^n} \big( |\widehat{g_0d\sigma}|^2 \big) (x)\gamma_R(x) d\lambda_{\pi}(x),
	\end{equation} 
	where $\gamma_R(x)=\gamma(x/R)$ for some Schwartz function $\gamma$ on $\pi$ chosen to have compact Fourier support and satisfy $\gamma(0)=1$.
		For sufficiently large $R$, by Fubini's theorem we have
	\begin{align*}
		\int_{\pi} (\xi_0-\eta_0)\cdot\nabla_{\mathbb{R}^n}& \big( |\widehat{g_0d\sigma}|^2 \big) (x)\gamma_R(x) d\lambda_{\pi}(x)\\
		&= 
		-2\pi i 
		\int_{S}\int_{S} (\xi_0 - \eta_0) \cdot 
		(\xi-\eta)
		g_0(\xi)g_0(\eta)
		\widehat{\gamma}_R(P_{\pi}(\xi-\eta))
		d\sigma(\xi) d\sigma(\eta).
	\end{align*}
	We now claim that 
	$$
		\int_{S}\int_{S} (\xi_0 - \eta_0) \cdot 
		(\xi-\eta)
		g_0(\xi)g_0(\eta)
		\widehat{\gamma}_R(P_{\pi}(\xi-\eta))
		d\sigma(\xi) d\sigma(\eta)
	$$
	is bounded away from zero for sufficiently large $R$, provided $\varepsilon$ is chosen small enough. 
	By our choice of $g_0$ we now write
	\begin{align*}
		&
		\int_{S}\int_{S} (\xi_0 - \eta_0) \cdot 
		(\xi-\eta)
		g_0(\xi)g_0(\eta)
		\widehat{
		\gamma}_R
		(P_{\pi}(\xi-\eta))
		d\sigma(\xi) d\sigma(\eta)	
		=
		2I_{\xi_0,\eta_0}(R)
		+ I_{\xi_0,\eta_0}'(R),
	\end{align*}
	where 
	\begin{align*}
		I_{\xi_0,\eta_0}(R)
		&:= 
		\int_{S}\int_{S} (\xi_0 - \eta_0) \cdot 
		(\xi-\eta)
		\chi_{C_\varepsilon(\xi_0)}(\xi) \chi_{C_\varepsilon(\eta_0)}(\eta)
		\widehat{
		\gamma}_R
		(P_{\pi}(\xi-\eta))
		d\sigma(\xi) d\sigma(\eta). 
	\end{align*}
	By (T) we may choose $\varepsilon>0$ so that (GT) holds on $C_\varepsilon(\xi_0)$ and $C_\varepsilon(\eta_0)$. 
	In particular, for $\zeta_0$ being $\xi_0$ or $\eta_0$, we have 
	$$
	\int_{C_\varepsilon(\zeta_0)} \varphi(\xi) d\sigma(\xi)
	= 
	\int_{{U}_{\varepsilon,\zeta_0}} \varphi(u+ \phi_{\varepsilon,\zeta_0}(u))J_{\varepsilon,\zeta_0}(u) du,
	$$
	for some open $U_{\varepsilon,\zeta_0} \subset \pi$ and $\phi_{\varepsilon,\zeta_0}: {U}_{\varepsilon,\zeta_0} \to \pi^\perp$, and any test function $\varphi$.
	Note that the Jacobians are bounded from above and below. Since $\xi_0-\eta_0 \in \pi^\perp$, it is straightforward to verify that 
	$
	| I_{\xi_0,\eta_0}'(R) | \lesssim_{\varepsilon,\xi_0,\eta_0} R^{-1}. 
	$
	Hence it suffices to show  
	\begin{equation}\label{e:GoalDec1-2}
		|I_{\xi_0,\eta_0}(R)| > 0
	\end{equation}
	uniformly in sufficiently large $R$. 
	Since $\varepsilon$ is small, 
	$
	(\xi_0 - \eta_0) \cdot 
		(\xi-\eta)
		\sim 
		|\xi_0 - \eta_0|^2
	$ for all $\xi \in C_{\varepsilon}(\xi_0) $ and all $\eta \in C_{\varepsilon}(\eta_0)$, 
	and hence, using the fact that $\xi_0 - \eta_0 \in \pi^\perp $, 
	\begin{align*}
	I_{\xi_0,\eta_0}(R)
		&\sim 
		| \xi_0 - \eta_0|^2
		\int_{C_\varepsilon(\xi_0)}\int_{C_\varepsilon(\eta_0)}
		\widehat{
		\gamma}_R (P_{\pi}(\xi-\eta))
		d\sigma(\xi) d\sigma(\eta)\\
		&= 
		| \xi_0 - \eta_0|^2
		\int_{C_\varepsilon(\xi_0)-\xi_0}\int_{C_\varepsilon(\eta_0)-\eta_0}
		\widehat{
		\gamma}_R(P_{\pi}(\xi- \eta ))
		d\sigma(\xi) d\sigma(\eta),
	\end{align*}
	where $d\sigma$ continues to denote surface measure. 
	If $u_0 \in {U}_{\varepsilon,\xi_0}$ is such that $\xi_0 = u_0 + \phi_{\varepsilon,\xi_0}(u_0)$, and $v_0 \in {U}_{\varepsilon,\eta_0}$ is such that $\eta_0 = v_0 + \phi_{\varepsilon,\eta_0}(v_0)$, then 
	$$
	C_\varepsilon(\xi_0)-\xi_0
	=
	\{u' + \phi_{\varepsilon, \xi_0}(u'+u_0) - \phi_{\varepsilon, \xi_0}(u_0): u' \in {U}_{\varepsilon,\xi_0}'
	\},
	$$
	where ${U}_{\varepsilon,\xi_0}':= {U}_{\varepsilon,\xi_0} - u_0$, and similarly 
	$$
	C_\varepsilon(\eta_0)-\eta_0
	=
	\{v' + \phi_{\varepsilon, \eta_0}(v'+v_0) - \phi_{\varepsilon, \eta_0}(v_0): v' \in {U}_{\varepsilon,\eta_0}'
	\},
	\;\;\;
	{U}_{\varepsilon,\eta_0}':= {U}_{\varepsilon,\eta_0} - v_0.
	$$
	Since $\phi_{\varepsilon, \xi_0}(u'+u_0) - \phi_{\varepsilon, \xi_0}(u_0)$ and $ \phi_{\varepsilon, \eta_0}(v'+v_0) - \phi_{\varepsilon, \eta_0}(v_0)$ belong to $ \pi^\perp$,  
	$$
	P_{\pi}(\xi-\eta)
	=
	P_\pi 
	\big( 
	u' + \phi_{\varepsilon, \xi_0}(u'+u_0) - \phi_{\varepsilon, \xi_0}(u_0)
	-
	(v' + \phi_{\varepsilon, \eta_0}(v'+v_0) - \phi_{\varepsilon, \eta_0}(v_0))
	\big)
	=
	u'-v' 
	$$
	for all $\xi \in C_{\varepsilon}(\xi_0)$ and  $\eta \in C_{\varepsilon}(\eta_0)$. 
	Hence, 
	\begin{align*}
	I_{\xi_0,\eta_0}(R)
		&\sim 
		| \xi_0 - \eta_0|^2
		\int_{{U}_{\varepsilon,\xi_0}'}
		\int_{{U}_{\varepsilon,\eta_0}'}
		\widehat{
		\gamma}_R(u'-v')
		du'dv'. 
	\end{align*}	
	Since ${U}_{\varepsilon,\xi_0}' $ and ${U}_{\varepsilon,\eta_0}'$ contain the origin,  
	\begin{align*}
	I_{\xi_0,\eta_0}(R)
		&\gtrsim 
		| \xi_0 - \eta_0|^2
		\int_{{U}_{\varepsilon,\xi_0}'  \cap {U}_{\varepsilon,\eta_0}'}\int_{{U}_{\varepsilon,\xi_0}'  \cap {U}_{\varepsilon,\eta_0}'}
		\widehat{
		\gamma}_R(u'-v') du'dv'
		\sim_\varepsilon |\xi_0-\eta_0|^2, 
	\end{align*}
	from which \eqref{e:GoalDec1-2} follows.

\section*{Acknowledgment}
This work is supported by JSPS Overseas Research Fellowship and  JSPS Kakenhi grant numbers 19K03546, 19H01796 and 21K13806 (Nakamura), JSPS Kakenhi grant numbers 	20J11851, JSPS Overseas Challenge Program for Young Researchers, Daiwa Foundation Small Grant, and funds from the Harmonic Analysis Incubation Research Group at Saitama University (Shiraki).  The third author would also like to thank Susana Guti\'{e}rrez for several discussions, and the University of Birmingham, where part of this research was conducted, for their kind hospitality.

\end{document}